\newtheorem{theorem}{Theorem}
\newtheorem{corollary}[theorem]{Corollary}
\newtheorem{lemma}[theorem]{Lemma}
\newtheorem{proposition}[theorem]{Proposition}
\newenvironment{proof}[1][Proof]{\noindent\textbf{#1.} }{\ \rule{0.5em}{0.5em}}
\begin{document}

\vspace*{10mm}

\begin{center}
\uppercase{\textbf{On K\"{o}nig-Egerv\'{a}ry Square-Stable Graphs}}
\end{center}

\bigskip

\begin{center}
\textsc{Vadim E. Levit and Eugen Mandrescu}
\end{center}

\bigskip

\textsc{Abstract.}
The \textit{stability number} of a graph $G$, denoted by $\alpha(G)$, is the
cardinality of a maximum stable set, and $\mu(G)$ is the cardinality of a
maximum matching in $G$. If $\alpha(G)+\mu(G)$ equals its order, then $G$ is a
K\"{o}nig-Egerv\'{a}ry graph.

In this paper we deal with \textit{square-stable graphs}, i.e., the graphs $G$
enjoying the equality $\alpha(G)=\alpha(G^{2})$, where $G^{2}$ denotes the
second power of $G$. In particular, we show that a K\"{o}nig-Egerv\'{a}ry
graph is square-stable if and only if it has a perfect matching consisting of
pendant edges, and in consequence, we deduce that well-covered trees are
exactly the square-stable trees.

\bigskip

\emph{Keywords: stable set, square-stable graph, well-covered graph, matching.}

\bigskip

2010 \textit{Mathematics Subject Classification}: 05C69, 05C76.

$$
\textsc{1. Introduction }
$$

All the graphs considered in this paper are simple, i.e., are finite,
undirected, loopless and without multiple edges. For such a graph $G=(V,E)$ we
denote its vertex set by $V=V(G)$ and its edge set by $E=E(G).$ If $X\subset
V$, then $G[X]$ is the subgraph of $G$ spanned by $X$. By $G-W$ we mean the
subgraph $G[V-W]$, if $W\subset V(G)$.

A set of pairwise non-adjacent vertices is \textit{a stable set} of $G$. The
\textit{stability number }of $G$, denoted by $\alpha(G)$, is the size
of a maximum stable set in $G$.

Let $\Omega(G)$ stand for the set $\{S:S$
\textit{is a maximum stable set of} $G\}$ and $core(G)=\cap\{S:S\in
\Omega(G)\}$ \cite{LeviMan3}.

$\theta(G)$ is the \textit{clique covering number} of $G$, i.e., the minimum
number of cliques whose union covers $V(G)$.

Recall that
\[
i(G)=\min\{|S|:S\ is\ a\ maximal\ stable\ set\ in\ G\},
\]
and
\[
\gamma(G)=\min\{|D|:D\ is\ a\ minimal\ domination\ set\ in\ G\}.
\]

A \textit{matching} is a set of non-incident edges of $G$; a matching of
maximum cardinality $\mu(G)$ is a \textit{maximum matching}, and a
\textit{perfect matching} is a matching covering all the vertices of $G$.

$G$ is called a \textit{K\"{o}nig-Egerv\'{a}ry graph} provided $\alpha(G)+\mu
(G)=\left\vert V(G)\right\vert $ \cite{dem,ster}. Various properties of K\"{o}nig-Egerv\'{a}ry graphs are presented in \cite{korach,KoNgPeis,levm2,LevMan3,LevMan2006,LevMan2007,LevMan2009,lov,lovpl}.

According to a well-known result of K\"{o}nig \cite{koen}, and Egerv\'{a}ry \cite{eger}, any bipartite graph is a K\"{o}nig-Egerv\'{a}ry graph. This class includes non-bipartite graphs as well (see, for instance, the
graphs $H_{1}$ and $H_{2}$ in Figure \ref{fig111}).

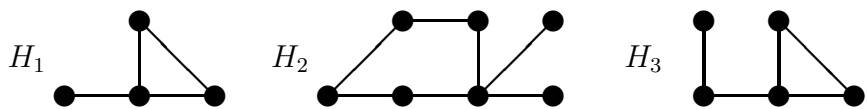
\begin{figure}[h]
\setlength{\unitlength}{1cm}\begin{picture}(5,1.3)\thicklines
\multiput(1.5,0)(1,0){3}{\circle*{0.29}}
\put(2.5,1){\circle*{0.29}}
\put(1.5,0){\line(1,0){2}}
\put(2.5,0){\line(0,1){1}}
\put(3.5,0){\line(-1,1){1}}
\put(1,0.5){\makebox(0,0){$H_{1}$}}
\multiput(5,0)(1,0){4}{\circle*{0.29}}
\put(7,1){\circle*{0.29}}
\put(6,1){\circle*{0.29}}
\put(8,1){\circle*{0.29}}
\put(5,0){\line(1,0){3}}
\put(6,1){\line(1,0){1}}
\put(7,0){\line(0,1){1}}
\put(5,0){\line(1,1){1}}
\put(7,0){\line(1,1){1}}
\put(4.5,0.5){\makebox(0,0){$H_{2}$}}
\multiput(10,0)(1,0){3}{\circle*{0.29}}
\multiput(10,1)(1,0){2}{\circle*{0.29}}
\put(10,0){\line(1,0){2}}
\put(10,0){\line(0,1){1}}
\put(11,0){\line(0,1){1}}
\put(11,1){\line(1,-1){1}}
\put(9.2,0.5){\makebox(0,0){$H_{3}$}}
\end{picture}\caption{Only $H_{3}$ is not a K\"{o}nig--Egerv\'{a}ry graph.}
\label{fig111}
\end{figure}

It is easy to see that if $G$ is a K\"{o}nig-Egerv\'{a}ry\emph{ }graph, then
$\alpha(G)\geq\mu(G)$, and that a graph $G$ having a perfect matching is a
K\"{o}nig-Egerv\'{a}ry\emph{ }graph if and only if $\alpha(G)=\mu(G)$.

The neighborhood of a vertex $v\in V$ is the set
\[
N(v)=\{w:w\in V,vw\in E\}, N[v]=N(v)\cup\{v\},
\]
and $N(A)=\cup\{N(v):v\in A\}$, for $A\subset V$.

If $G[N(v)]$ is a complete subgraph in $G$, then $v$ is a \textit{simplicial
vertex} of $G$, and by $simp(G)$ we mean the set of all simplicial vertices of
$G$. A maximal clique in $G$ is called a \textit{simplex} if it contains at
least a simplicial vertex of $G$ \cite{chhala}. $G$ is said to be
\textit{simplicial} if every vertex of $G$ is simplicial or it is adjacent to
a simplicial vertex of $G$ \cite{chhala}. If $\left\vert N(v)\right\vert=\left\vert \{w\}\right\vert =1$, then $v$ is a \textit{pendant vertex} and $vw$ is a \textit{pendant edge} of $G$.

By $K_{n}$ and $P_{n}$ we denote the complete graph and the cordless path on $n\geq1$ vertices, respectively. $K_{n,m}$ is the complete bipartite graph with two maximal stable sets of cardinalities $n$ and $m$.

$G$ is \textit{well-covered} \cite{plum} if every maximal
stable set of $G$ is also a maximum stable set, i.e., it is in $\Omega(G)$.
$G$ is called \textit{very well-covered} \cite{fav1} provided
$G$ is well-covered without isolated vertices, and $\left\vert
V(G)\right\vert =2\alpha(G)$.

%\newpage
\lhead{}
\chead{Levit \& Mandrescu - Square-Stable Graphs}
\rhead{}

The following results will be used in the sequel.

\begin{proposition}
\label{prop8}If $G$ is a graph of order $n\geq2$, then:

\begin{enumerate}
\item \cite{levm} $G$ is very well-covered if and only if it is a well-covered
K\"{o}nig-Egerv\'{a}ry graph;

\item \cite{levm} a connected K\"{o}nig-Egerv\'{a}ry graph is well-covered if
and only if it is very well-covered.

\item \cite{CampPlum} $H=G-N[v]$ is well-covered and $\alpha(H)=\alpha(G)-1$,
for every $v\in V(G)$, whenever $G$ is a non-complete well-covered graph.
\end{enumerate}
\end{proposition}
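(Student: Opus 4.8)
These three items are quoted from \cite{levm} and \cite{CampPlum}, so the task is to recover their proofs; I would take the third (Campbell--Plummer) lemma first, since it uses nothing beyond the definition of well-coveredness. Fix $v\in V(G)$ and set $H=G-N[v]$. To get $\alpha(H)=\alpha(G)-1$: any stable set of $H$ is contained in $V(G)\setminus N[v]$, so appending $v$ to a maximum stable set of $H$ produces a stable set of $G$, whence $\alpha(G)\ge\alpha(H)+1$; conversely, extend $\{v\}$ to a maximal stable set $S$ of $G$, use well-coveredness to get $\left\vert S\right\vert=\alpha(G)$, and note that $S\setminus\{v\}$ is a stable set of $H$ (its members avoid $N[v]$ because $S$ is stable), so $\alpha(H)\ge\alpha(G)-1$. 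To see that $H$ is well-covered: for any maximal stable set $T$ of $H$, the set $T\cup\{v\}$ is stable in $G$ and in fact maximal there, since a vertex of $N(v)$ is adjacent to $v$ while a vertex of $V(H)$ that could be added would contradict maximality of $T$; therefore $\left\vert T\cup\{v\}\right\vert=\alpha(G)$, i.e. $\left\vert T\right\vert=\alpha(G)-1=\alpha(H)$, so every maximal stable set of $H$ is maximum.

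For the first two items the underlying mechanism is Gallai's identity: the complement of a maximum stable set is a minimum vertex cover, and any vertex cover has at least $\mu(G)$ vertices, so $G$ is a K\"onig--Egerv\'ary graph exactly when $\mu(G)=\left\vert V(G)\right\vert-\alpha(G)$; granting moreover $\left\vert V(G)\right\vert=2\alpha(G)$, this is equivalent to $G$ having a perfect matching. Thus in the first item the direction ``very well-covered $\Rightarrow$ well-covered K\"onig--Egerv\'ary'' reduces to exhibiting a perfect matching in a very well-covered graph, which I would extract from Favaron's structural description of such graphs \cite{fav1}; the reverse direction, ``well-covered K\"onig--Egerv\'ary $\Rightarrow$ very well-covered'' (tacitly for graphs with no isolated vertex --- otherwise $2K_{1}$ is a counterexample to the literal statement), reduces to upgrading the inequality $\alpha(G)\ge\mu(G)$, valid in every K\"onig--Egerv\'ary graph, to the equality $\mu(G)=\alpha(G)$, equivalently $\left\vert V(G)\right\vert=2\alpha(G)$. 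My plan for this upgrade is to take a maximum matching $M$, form the stable set $U$ of $M$-exposed vertices, extend it to a maximum stable set $S$, and exploit the rigidity $\left\vert V(G)\setminus S\right\vert=\mu(G)$ --- so $V(G)\setminus S$ must cover every edge of $M$ with nothing to spare --- together with an augmenting-path argument, to force $U=\emptyset$. This is the delicate step, and here I expect to lean on the analysis of \cite{levm}.

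The second item is then a corollary of the first: its content is that a connected, well-covered K\"onig--Egerv\'ary graph is very well-covered, and since a connected graph of order $n\ge2$ has no isolated vertex, the first item applies directly; the converse implication is immediate because ``very well-covered'' subsumes ``well-covered''. In summary, I expect the whole proposition to be routine bookkeeping with Gallai's identity and the well-covered definition, except for one genuine obstacle --- that every very well-covered graph carries a perfect matching, equivalently the passage from $\alpha(G)\ge\mu(G)$ to $\alpha(G)=\mu(G)$ inside the first item --- which is precisely where the structural input of \cite{fav1} and \cite{levm} is needed.
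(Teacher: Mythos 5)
The paper itself offers no proof of this proposition: all three items are imported verbatim from \cite{levm} and \cite{CampPlum} as known background, so there is nothing internal to compare you against. Judged on its own terms, your proof of item~(3) is complete and correct --- the two inequalities giving $\alpha(H)=\alpha(G)-1$ and the observation that $T\cup\{v\}$ is a maximal stable set of $G$ for every maximal stable set $T$ of $H$ are exactly the standard Campbell--Plummer argument, and you are right that non-completeness plays no role beyond keeping $H$ nonempty for a suitable choice of $v$. Your reduction of items~(1) and~(2) via the Gallai identity is also sound: $G$ is K\"onig--Egerv\'ary iff $\tau(G)=\mu(G)$, and under $\left\vert V(G)\right\vert=2\alpha(G)$ this is equivalent to the existence of a perfect matching; the caveat you raise about isolated vertices ($2K_{1}$ is a well-covered K\"onig--Egerv\'ary graph that is not very well-covered) is a genuine and correctly identified gap in the literal statement of item~(1), which the original source handles by hypothesis. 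The only shortfall is that for items~(1) and~(2) you stop at a reduction and explicitly delegate the crux --- that very well-covered graphs have perfect matchings (Favaron), and that well-coveredness forces $\alpha(G)\leq\mu(G)$ in a K\"onig--Egerv\'ary graph without isolated vertices --- to \cite{fav1} and \cite{levm}; so those two items remain proof sketches rather than proofs. Since the paper does exactly the same (cites rather than proves), this is acceptable here, but be aware that your proposed augmenting-path argument for forcing $U=\emptyset$ is the genuinely delicate step and would need to be carried out in full if a self-contained proof were required.
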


The distance between two vertices $v,w$ of a graph $G$ is denoted by
$dist_{G}(v,w)$, or $dist(v,w)$ if no ambiguity. $G^{2}$ denotes the
\textit{second power} of $G$, i.e., the graph with
\[
V(G^{2})=V(G),\quad E(G^{2})=E(G)\cup\{uv:u,v\in V(G),dist_{G}(u,v)=2\}.
\]
Clearly, any stable set of $G^{2}$ is stable in $G$, as well, while the
converse is not generally true. Therefore, we may assert that $1\leq
\alpha(G^{2})\leq\alpha(G)$. Notice that the both bounds are sharp. For
instance, if:

\begin{itemize}
\item $G=K_{1,n}$, and $n\geq2$, then $\alpha(G)=n>$ $\alpha(G^{2})=1$;

\item $G=P_{4}$, then $\alpha(G)=\alpha(G^{2})=2$.
\end{itemize}

\begin{figure}[h]
\setlength{\unitlength}{1cm} \begin{picture}(5,1.5)\thicklines
\multiput(1,0.5)(1,0){3}{\circle*{0.29}}
\multiput(4,0.5)(1,0){6}{\circle*{0.29}}
\multiput(10,0.5)(1,0){4}{\circle*{0.29}}
\multiput(1,1.5)(1,0){3}{\circle*{0.29}}
\multiput(4,1.5)(1,0){3}{\circle*{0.29}}
\multiput(8,1.5)(1,0){2}{\circle*{0.29}}
\multiput(10,1.5)(1,0){3}{\circle*{0.29}}
\put(2,0.5){\line(1,1){1}}
\put(1,0.5){\line(1,0){1}}
\put(2,1.5){\line(1,-1){1}}
\put(2,0.5){\line(1,0){1}}
\put(3,0.5){\line(0,1){1}}
\put(4,0.5){\line(1,0){5}}
\put(10,0.5){\line(1,0){3}}
\put(10,0.5){\line(0,1){1}}
\put(2,1.5){\line(1,0){1}}
\put(11,1.5){\line(1,0){1}}
\put(5,1.5){\line(1,-1){1}}
\put(5,0.5){\line(1,1){1}}
\put(5,1.5){\line(1,0){1}}
\put(6,0.5){\line(0,1){1}}
\put(7,0.5){\line(1,1){1}}
\multiput(1,0.5)(1,0){2}{\line(0,1){1}}
\multiput(4,0.5)(1,0){2}{\line(0,1){1}}
\multiput(8,0.5)(1,0){2}{\line(0,1){1}}
\multiput(11,0.5)(1,0){2}{\line(0,1){1}}
\put(1.5,0.1){\makebox(0,0){$G_{1}$}}
\put(6.5,0.1){\makebox(0,0){$G_{2}$}}
\put(11.5,0.1){\makebox(0,0){$G_{3}$}}
\end{picture}
\caption{$G_{1}$ and $G_{2}$ are square-stable graphs, while $G_{3}$ is not
square-stable.}
\end{figure}
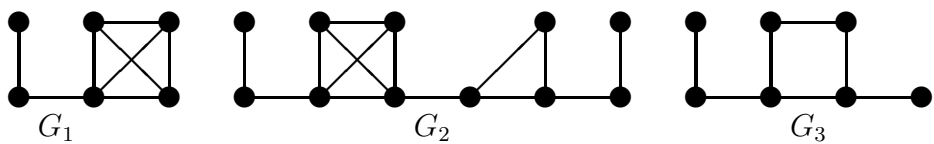

In this paper we characterize K\"{o}nig-Egerv\'{a}ry graphs satisfying the equality $\alpha(G)=\alpha(G^{2})$.
The graphs enjoying this property are called \textit{square-stable} \cite{LevManAlba}. The investigation of these graphs was started in \cite{ranvol} and continued in \cite{LevManAlba}.

\begin{center}
\textsc{2. Results}
\end{center}
It is evident that $G$ and $G^{2}$ are simultaneously connected or
disconnected. In addition, if $H_{i},1\leq i\leq k$, are the connected
components of graph $G$, then $S\in\Omega(G)$ if and only if $S\cap
V(H_{i})\in\Omega(H_{i}),1\leq i\leq k$. Hence, we may conclude that a
disconnected graph is square-stable if and only if each of its connected
components is square-stable. Therefore, in the rest of the paper all the
graphs are connected, unless otherwise stated.

In general, $\alpha(G^{2})\leq\theta(G^{2})\leq
\gamma(G)\leq i(G)\leq\alpha(G)\leq\theta(G)$.

\begin{theorem}
\label{th5}For a graph $G$ the following statements are equivalent:

\emph{(i)} every vertex of $G$ belongs to exactly one simplex of $G;$

\emph{(ii)} $G$ satisfies $\alpha(G)=\alpha(G^{2})$, i.e., $G$ is square-stable;

\emph{(iii)} $G$ satisfies $\theta(G)=\theta(G^{2})$;

\emph{(iv)} $G$ satisfies $\alpha(G^{2})=\theta(G^{2})=\gamma(G)=i(G)=\alpha
(G)=\theta(G)$;

\emph{(v)} $G$ is simplicial and well-covered;

\emph{(vi)} there exists some $S\in\Omega(G)$ such that $dist_{G}(a,b)\geq3$ holds
for all distinct $a,b\in S$.
\end{theorem}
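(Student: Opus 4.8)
The plan is to prove the six statements equivalent by establishing a cycle of implications, using the chain $\alpha(G^{2})\leq\theta(G^{2})\leq\gamma(G)\leq i(G)\leq\alpha(G)\leq\theta(G)$ as the backbone. First I would show (i)$\Rightarrow$(iv): if every vertex lies in exactly one simplex, then the simplices partition $V(G)$, and picking one simplicial vertex from each simplex yields a stable set of $G^{2}$ (two such vertices are in distinct cliques and at distance at least $3$, since a common neighbor would force one simplicial vertex's closed neighborhood to meet two maximal cliques). That stable set has size equal to the number of simplices, which simultaneously is a clique cover of $G$ (the simplices themselves) and a clique cover of $G^{2}$ (each simplex stays a clique in $G^{2}$, as its vertices were already pairwise adjacent — I should double check a simplex remains a clique and that the simplices still cover, which they do since they cover $V$). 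This squeezes the whole chain to equality, giving (iv). Then (iv)$\Rightarrow$(iii) and (iv)$\Rightarrow$(ii) are immediate by reading off the relevant equalities from the chain.

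Next I would close the loop by proving (ii)$\Rightarrow$(vi)$\Rightarrow$(v)$\Rightarrow$(i), and separately (iii)$\Rightarrow$(ii) so that (iii) is folded in. For (ii)$\Rightarrow$(vi): take $S\in\Omega(G^{2})$ with $|S|=\alpha(G^{2})=\alpha(G)$; then $S\in\Omega(G)$ as well, and distinct $a,b\in S$ are non-adjacent in $G^{2}$, which by definition of $G^{2}$ means $dist_{G}(a,b)\geq 3$. For (vi)$\Rightarrow$(v): given such an $S$, I want to show $G$ is simplicial and well-covered. Each $a\in S$ together with $N(a)$ should be shown to contain a simplicial vertex; the condition $dist(a,b)\geq 3$ for $a\neq b$ in $S$ forces the sets $N[a]$, $a\in S$, to be pairwise disjoint and, by maximality of $S$ as a stable set, to cover $V(G)$, so $\{N[a]:a\in S\}$ partitions $V(G)$. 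Within each $N[a]$ I would argue $a$ is simplicial: if $x,y\in N(a)$ were non-adjacent, then $(S\setminus\{a\})\cup\{x,y\}$ would be stable in $G$ (here I use the disjointness of the $N[b]$'s to see $x,y$ have no neighbors in $S\setminus\{a\}$) of size $\alpha(G)+1$, a contradiction. Hence each $N[a]$ is a clique containing the simplicial vertex $a$, every vertex is in some $N[a]$, so $G$ is simplicial; and since these cliques partition $V$, every maximal stable set is forced to pick exactly one vertex per clique, so all maximal stable sets have size $|S|$, i.e., $G$ is well-covered.

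Then (v)$\Rightarrow$(i): $G$ simplicial and well-covered. I would use Proposition~\ref{prop8}(3) repeatedly: removing $N[v]$ for a simplicial vertex $v$ drops $\alpha$ by exactly $1$ and preserves well-coveredness, so by induction on $\alpha(G)$ the simplices $N[v]$ over a suitable choice of simplicial vertices partition $V(G)$, which is precisely (i) — one must check each vertex ends up in exactly one simplex and no vertex is left in two, which follows because once $v$ is removed with its closed neighborhood, no remaining vertex is adjacent to $v$, so distinct chosen simplices are disjoint. Finally (iii)$\Rightarrow$(ii): from $\theta(G)=\theta(G^{2})$ and the chain, since $\alpha(G)\leq\theta(G)=\theta(G^{2})$ and $\alpha(G^{2})\leq\theta(G^{2})$ while $\alpha(G^{2})\leq\alpha(G)$, I'd need an extra inequality linking $\alpha(G)$ down to $\theta(G^{2})$ from below — actually the cleanest route is to note any clique cover of $G^{2}$ of size $\theta(G^{2})$ is also a clique cover of $G$, and conversely, so a minimum clique cover of $G$ can be taken to consist of cliques of $G$ that remain cliques in $G^{2}$; combined with well-known bounds this should force the simplex structure, but if that is awkward I would instead route (iii) into the cycle via (i) directly.

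I expect the main obstacle to be the step (v)$\Rightarrow$(i), more precisely verifying that a simplicial well-covered graph's simplices partition the vertex set with each vertex in exactly one simplex (not merely that they cover $V$). The subtlety is that "simplicial" only says each vertex is in or adjacent to a simplicial vertex, and a priori a vertex could lie in two distinct simplices or a simplicial vertex could be adjacent to another simplicial vertex in a different maximal clique; ruling this out is where well-coveredness and Proposition~\ref{prop8}(3) must be used carefully, peeling off simplices one at a time and checking the inductive invariant that the remaining graph is still simplicial and well-covered with stability number dropped by one.
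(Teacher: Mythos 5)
The paper itself does not prove this theorem: it assembles the equivalences from three cited sources (Randerath--Volkmann for \emph{(i)}--\emph{(iv)}, Prisner--Topp--Vestergaard for \emph{(i)} $\Leftrightarrow$ \emph{(v)}, and the authors' earlier work for \emph{(ii)} $\Leftrightarrow$ \emph{(vi)}), so there is no in-paper argument to compare against. Your architecture --- the cycle \emph{(i)} $\Rightarrow$ \emph{(iv)} $\Rightarrow$ \emph{(ii)} $\Rightarrow$ \emph{(vi)} $\Rightarrow$ \emph{(v)} $\Rightarrow$ \emph{(i)} built on the inequality chain --- is sound, and the steps \emph{(i)} $\Rightarrow$ \emph{(iv)} (squeezing the chain with the partition into simplices), \emph{(iv)} $\Rightarrow$ \emph{(ii)},\emph{(iii)}, \emph{(ii)} $\Rightarrow$ \emph{(vi)}, and \emph{(vi)} $\Rightarrow$ \emph{(v)} are correct and essentially complete as written.

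Two genuine gaps remain. The serious one is folding \emph{(iii)} back into the cycle. The chain $\alpha(G^{2})\leq\theta(G^{2})\leq\gamma(G)\leq i(G)\leq\alpha(G)\leq\theta(G)$ shows that $\theta(G)=\theta(G^{2})$ collapses everything from $\theta(G^{2})$ up to $\theta(G)$, but it says nothing about the leftmost inequality $\alpha(G^{2})\leq\theta(G^{2})$, which is precisely the one that can be strict; and your proposed repair rests on the claim that a clique cover of $G^{2}$ is also a clique cover of $G$, which is false (for $G=P_{3}$ the whole vertex set is a clique of $G^{2}$ but not of $G$; only the reverse containment holds, giving $\theta(G^{2})\leq\theta(G)$, which you already have). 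So \emph{(iii)} $\Rightarrow$ \emph{(ii)} is unproved, and ``route \emph{(iii)} via \emph{(i)}'' is not yet an argument. The second gap is \emph{(v)} $\Rightarrow$ \emph{(i)}: you correctly flag that the inductive invariant (that $G-N[v]$ stays simplicial, and that the peeled-off neighborhoods account for \emph{all} simplices of $G$, not merely cover $V(G)$) is the crux, but you leave it unverified. This step can in fact be done directly, without Proposition \ref{prop8}: a simplicial vertex $u$ lies in the unique maximal clique $N[u]$, so the simplices $Q_{1},\dots,Q_{k}$ of a simplicial graph cover $V(G)$ and choosing one simplicial vertex per simplex gives a maximal (indeed dominating) stable set of size $k$; well-coveredness together with the clique-cover bound $\alpha(G)\leq k$ forces $\alpha(G)=k$, and if some $w$ lay in two simplices, extending $\{w\}$ to a maximal stable set would produce one of size at most $k-1<\alpha(G)$, contradicting well-coveredness. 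As submitted, both of these steps are missing, so the proof is incomplete.
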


Recall that: the equivalence of the assertions \emph{(i)}, \emph{(ii)},
\emph{(iii)}, \emph{(iv)}, was proved in \cite{ranvol}, the fact that
"\emph{(i)} $\iff$ \emph{(v)}" was\emph{ }shown in \cite{pristopvest}, while
"\emph{(ii)} $\iff$ \emph{(vi)}" was done in \cite{LevManAlba}.

For square-stable graphs there are interesting connections between properties
of vertices in $G$ and $G^{2}$ correspondingly.

\begin{proposition}
\label{prop11}If $G$ is a square-stable graph and $v\in V(G)$, then the
following assertions are true:

\emph{(i)} $v$ is in some $S\in\Omega(G^{2})$ if and only if $v$ is a
simplicial vertex in $G$;

\emph{(ii)} $v$ belongs to all maximum stable sets of $G^{2}$ if and
only if $v$ is the unique simplicial vertex in its simplex in $G$.
\end{proposition}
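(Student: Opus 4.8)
The plan is to work entirely with the partition of $V(G)$ into simplices that Theorem~\ref{th5} provides for a square-stable graph: since every vertex lies in \emph{exactly one} simplex, the simplices $Q_{1},\dots ,Q_{k}$ are pairwise disjoint and cover $V(G)$. Two elementary facts will do most of the work. First, if $s$ is a simplicial vertex then $N[s]$ is already a maximal clique (any vertex outside $N[s]$ is non-adjacent to $s$), so $N[s]$ is \emph{the} simplex containing $s$. Second, two simplicial vertices $a,b$ lying in different simplices satisfy $dist_{G}(a,b)\ge 3$: an edge $ab$ would put $b$ into the simplex $N[a]$, and a common neighbour $w$ of $a,b$ would lie in both $N[a]$ and $N[b]$, each time contradicting disjointness of the simplices. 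Since each $Q_{i}$ is a clique of $G$ and hence of $G^{2}$, any stable set of $G^{2}$ meets each $Q_{i}$ in at most one vertex, so $\alpha(G^{2})\le k$; and picking one simplicial vertex from every simplex gives, by the second fact, a set of $k$ vertices pairwise at distance $\ge 3$, i.e.\ a stable set of $G^{2}$ of size $k$. Hence $\alpha(G^{2})=k$, and $\Omega(G^{2})$ is precisely the family of stable sets of $G^{2}$ that contain exactly one vertex of each simplex.

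For part \emph{(i)}, the ``if'' direction is constructive: given a simplicial $v$ sitting in its simplex $Q_{i}=N[v]$, adjoin to $v$ one simplicial vertex chosen from each $Q_{j}$, $j\ne i$; the resulting $k$-set consists of simplicial vertices, one per simplex, pairwise at distance $\ge 3$ by the second fact, hence lies in $\Omega(G^{2})$ and contains $v$. For the ``only if'' direction I would argue by contraposition. If $v$ is not simplicial, then $N(v)\not\subseteq Q_{i}$ (otherwise $N[v]$ would be a clique), so $v$ has a neighbour $u$ in some other simplex $Q_{j}$; because $Q_{j}$ is a clique, $v$ is then within distance $2$ of \emph{every} vertex of $Q_{j}$, i.e.\ $v$ is adjacent in $G^{2}$ to all of $Q_{j}$. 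Consequently any stable set of $G^{2}$ containing $v$ is disjoint from $Q_{j}$, hence has at most $k-1$ vertices and is not maximum; so $v$ lies in no member of $\Omega(G^{2})$.

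Part \emph{(ii)} then follows quickly from part \emph{(i)}. If $v\in core(G^{2})=\bigcap\Omega(G^{2})$, then in particular $v$ lies in some member of $\Omega(G^{2})$, so $v$ is simplicial by \emph{(i)}, say with simplex $Q_{i}=N[v]$; were there a second simplicial vertex $s\in Q_{i}$, part \emph{(i)} would supply some $S\in\Omega(G^{2})$ containing $s$, and since $|S\cap Q_{i}|=1$ this $S$ could not contain $v$, contradicting $v\in core(G^{2})$. Conversely, if $v$ is the unique simplicial vertex of its simplex $Q_{i}$, then for every $S\in\Omega(G^{2})$ the single vertex of $S$ lying in $Q_{i}$ is simplicial by \emph{(i)}, hence equals $v$; thus $v$ belongs to every maximum stable set of $G^{2}$.

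The only genuinely non-routine point is the ``only if'' half of \emph{(i)}: the observation that a neighbour of $v$ in another simplex $Q_{j}$ pulls the whole clique $Q_{j}$ into distance $2$ of $v$, thereby preventing $Q_{j}$ from being represented in a $G^{2}$-stable set through $v$. Everything else is bookkeeping with the simplex partition, the main small items to check carefully being that the simplices really do partition $V(G)$ (disjointness from ``exactly one simplex'', coverage from simpliciality) and the identity $\alpha(G^{2})=k$.
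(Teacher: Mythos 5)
Your proof is correct, but it takes a genuinely different route from the paper's. The paper argues locally: for each direction it performs an exchange inside a maximum stable set (e.g., replacing $v$ by two non-adjacent neighbours $x,y$, or replacing a non-simplicial neighbour $u$ by $v$ and some $y\in N_{G}(u)-N_{G}[v]$) and derives a contradiction with $\alpha(G)=\alpha(G^{2})$; it leans on characterizations \emph{(ii)} and \emph{(vi)} of Theorem \ref{th5}. You instead invoke characterization \emph{(i)} and exploit the resulting partition of $V(G)$ into simplices $Q_{1},\dots,Q_{k}$ globally: each $Q_{i}$ is a clique of $G^{2}$, so $\alpha(G^{2})\le k$, while a system of simplicial representatives is pairwise at distance $\ge 3$, so $\alpha(G^{2})=k$ and $\Omega(G^{2})$ consists exactly of the $G^{2}$-stable transversals of the partition; both parts of the proposition then fall out almost mechanically, with the only real work being your observation that a non-simplicial $v$ is adjacent in $G^{2}$ to an entire foreign simplex $Q_{j}$ and hence can represent only $Q_{i}$ while blocking $Q_{j}$. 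All the supporting claims check out (in particular, $N[s]$ is indeed \emph{the} simplex of a simplicial vertex $s$, and disjointness of simplices forces distance $\ge 3$ between simplicial vertices of different simplices). The trade-off: the paper's proof is more elementary and self-contained, needing only the equality $\alpha(G)=\alpha(G^{2})$; yours front-loads the structural equivalence \emph{(i)}$\iff$\emph{(ii)} of Theorem \ref{th5} but in exchange yields a sharper global picture — $\alpha(G^{2})$ equals the number of simplices, $\Omega(G^{2})$ is the family of stable transversals, and $core(G^{2})$ is read off as the set of simplices with a unique simplicial vertex — which also explains Figure \ref{1515} at a glance.
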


\begin{proof}
\emph{(i)} Let $v$ belong to some $S\in\Omega(G^{2})$. If $v$ is not
simplicial in $G$, then there are $x,y\in N_{G}(v)$, such that $xy\notin E(G)$.
In addition,
\[
N_{G}(x)\cap S=\{v\}=N_{G}(y)\cap S,
\]
because $dist_{G}(v,u)\geq3$, for each $u\in S-\{v\}$.
Since $S$ belongs also to $\Omega(G)$, we infer that $(S\cup\{x,y\})-\{v\}$
must be stable in $G$ and of cardinality greater than $\left\vert S\right\vert $,
thus contradicting $\left\vert S\right\vert =\alpha(G)$.

Conversely, let $v$ be a simplicial vertex in $G$. Suppose that $S\in
\Omega(G^{2})$, and let $\{u\}=S\cap N_{G}[v]$. Hence, $dist_{G}(x,v)\geq3$
for all $x\in S-\{u\}$. Otherwise, there exists a vertex $x_{0}\in S-\{u\}$
such that $dist_{G}(v,x_{0})=2$, which implies $dist_{G}(u,x_{0})=2$, because
$v$ is simplicial in $G$. Therefore,
\[
S^{\prime}=(S-\{u\})\cup\{v\}\in\Omega(G^{2})\ and\ v\in S^{\prime}.
\]
\emph{(ii)} Suppose that a vertex $v$ belongs to all maximum stable
sets of $G^{2}$. According to \emph{(i)}, $v$ must be a simplicial
vertex in $G$. If some $u\in N_{G}(v)$ is also simplicial in $G$, then again
by \emph{(i)}, $u$ belongs to some $S\in\Omega(G^{2})$. Clearly, $v\notin S$
and this contradicts the hypothesis on $v$.

Conversely, let $v$ be the unique simplicial vertex in its simplex. Assume
that there is some $S\in\Omega(G^{2})$ such that $v\notin S$. Since $G$ is
square-stable, $S\in\Omega(G)$. Clearly, there exists $u\in S\cap N_{G}(v)$,
because, otherwise, $S\cup\{v\}$ is stable in $G$ and $\left\vert
S\cup\{v\}\right\vert >\alpha(G)$. Hence, $S\cap N_{G}(u)=\emptyset$, and
each $x\in S$ satisfies $dist_{G}(x,u)\geq2$.

The vertex $u$ is not simplicial, since $v$ is the unique simplicial vertex of
its simplex and $u\in N_{G}(v)$. Consequently, there is $y\in N_{G}%
(u)-N_{G}\left[  v\right]  $.

Now we get that $(S\cup\{v,y\})-\{u\}$ is stable
in $G$, because $S\in\Omega(G^{2})$ means that $dist_{G}(x,u)\geq3$, which
implies $dist_{G}(x,y)\geq2$ for all $x\in S-\{u\}$. Thus, we get that
\[
\left\vert (S\cup\{v,y\})-\{u\}\right\vert >\alpha(G),
\]
in contradiction with the definition of $\alpha(G)$. In summary, $v$ must
belong to all maximum stable sets of $G^{2}$.
\end{proof}

The graph in Figure \ref{1515} illustrates interconnections between $simp(G)$ and $core(G^{2})$ implied by Proposition \ref{prop11}.

\begin{figure}[h]
\setlength{\unitlength}{1cm} \begin{picture}(5,2.3)\thicklines
\multiput(3,0.5)(1,0){4}{\circle*{0.29}}
\multiput(3,1.5)(1,0){4}{\circle*{0.29}}
\put(4,1.5){\line(1,0){1}}
\put(5,0.5){\line(1,0){1}}
\multiput(3,0.5)(1,0){4}{\line(0,1){1}}
\multiput(3,0.5)(1,0){3}{\line(1,1){1}}
\put(2.3,1){\makebox(0,0){$G$}}
\put(3,0){\makebox(0,0){$v_{1}$}}
\put(4,0){\makebox(0,0){$v_{2}$}}
\put(5,0){\makebox(0,0){$v_{3}$}}
\put(6,0){\makebox(0,0){$v_{4}$}}
\put(3,2){\makebox(0,0){$v_{5}$}}
\put(4,2){\makebox(0,0){$v_{6}$}}
\put(5,2){\makebox(0,0){$v_{7}$}}
\put(6,2){\makebox(0,0){$v_{8}$}}
\multiput(8,0.5)(1,0){4}{\circle*{0.29}}
\multiput(8,1.5)(1,0){4}{\circle*{0.29}}
\multiput(8,0.5)(1,0){4}{\line(0,1){1}}
\multiput(8,0.5)(1,0){3}{\line(1,1){1}}
\multiput(9,1.5)(1,0){2}{\line(1,-1){1}}
\put(8,1.5){\line(1,0){3}}
\put(8,0.5){\line(1,0){3}}
\put(8,0.5){\line(2,1){2}}
\put(7.3,1){\makebox(0,0){$G^{2}$}}
\put(8,0){\makebox(0,0){$v_{1}$}}
\put(9,0){\makebox(0,0){$v_{2}$}}
\put(10,0){\makebox(0,0){$v_{3}$}}
\put(11,0){\makebox(0,0){$v_{4}$}}
\put(8,2){\makebox(0,0){$v_{5}$}}
\put(9,2){\makebox(0,0){$v_{6}$}}
\put(10,2){\makebox(0,0){$v_{7}$}}
\put(11,2){\makebox(0,0){$v_{8}$}}
\end{picture}
\caption{$G$ is square-stable and $v_{2},v_{4},v_{5},v_{8}\in simp(G)$, but
only $v_{2},v_{5}\in core(G^{2})$.}
\label{1515}
\end{figure}
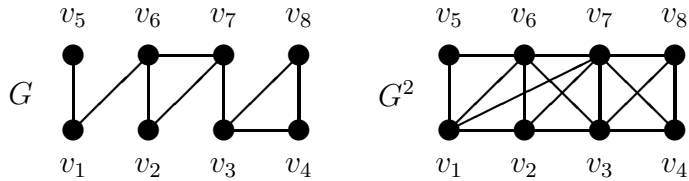

Clearly, any complete graph is square-stable. Moreover, since $K_{n}^{2}%
=K_{n}$, we get that
\[
\left\vert \Omega(K_{n})\right\vert =\left\vert
\Omega(K_{n}^{2})\right\vert =\left\vert \{\{v\}:v\in V(K_{n})\}\right\vert
=n.
\]
Let us notice that the equality $\left\vert
\Omega(G^{2})\right\vert =1$ does not ensure that $G$ is square-stable, e.g.,
the graph $G$ from Figure \ref{565656} is not square-stable, but $\left\vert
\Omega(G^{2})\right\vert =1$.

\begin{figure}[h]
\setlength{\unitlength}{1cm} \begin{picture}(5,2.3)\thicklines
\multiput(3,1)(1,0){2}{\circle*{0.29}}
\multiput(3,2)(2,0){2}{\circle*{0.29}}
\put(4,0){\circle*{0.29}}
\put(6,2){\circle*{0.29}}
\put(3,1){\line(1,0){1}}
\put(3,1){\line(0,1){1}}
\put(3,1){\line(1,-1){1}}
\put(5,2){\line(1,0){1}}
\put(3,1){\line(2,1){2}}
\put(3,2){\line(1,-1){1}}
\put(4,1){\line(1,1){1}}
\put(4,0){\line(1,2){1}}
\put(2.3,1){\makebox(0,0){$G$}}
\put(2.7,2){\makebox(0,0){$x$}}
\put(6.3,2){\makebox(0,0){$y$}}
\put(4.3,0){\makebox(0,0){$z$}}
\multiput(8,1)(1,0){2}{\circle*{0.29}}
\multiput(8,2)(2,0){2}{\circle*{0.29}}
\put(9,0){\circle*{0.29}}
\put(11,2){\circle*{0.29}}
\put(8,1){\line(1,0){1}}
\put(8,1){\line(0,1){1}}
\put(8,1){\line(1,-1){1}}
\put(8,2){\line(1,0){2}}
\put(8,2){\line(1,-2){1}}
\put(10,2){\line(1,0){1}}
\put(8,1){\line(2,1){2}}
\put(8,2){\line(1,-1){1}}
\put(9,1){\line(1,1){1}}
\put(9,0){\line(0,1){1}}
\put(9,0){\line(1,2){1}}
\put(8,1){\line(3,1){3}}
\put(9,1){\line(2,1){2}}
\put(9,0){\line(1,1){2}}
\put(7.3,1){\makebox(0,0){$G^{2}$}}
\put(7.7,2){\makebox(0,0){$x$}}
\put(11.3,2){\makebox(0,0){$y$}}
\end{picture}
\caption{$\alpha(G)=\left\vert \{x,y,z\}\right\vert =3$, while $\Omega
(G^{2})=\{\{x,y\}\}$.}
\label{565656}
\end{figure}

In general, a graph having a unique perfect matching is not necessarily
square-stable. For instance, $K_{3}+e$ has a unique perfect matching, but is
not square-stable, because $(K_{3}+e)^2=K_4$.

Further, we pay attention to graphs having a perfect matching
consisting of pendant edges, which is obviously unique.

\begin{lemma}
\label{prop6}If $G$ has a perfect matching consisting of pendant edges, then
$G$ is square-stable. Moreover, $\Omega(G^{2})=\{S_{0}\}$, where $S_{0}=\{v:v$
\textit{is pendant in }$G\}$.
\end{lemma}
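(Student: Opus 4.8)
The plan is to exhibit the pendant-vertex set $S_{0}$ as a maximum stable set of $G$ all of whose vertices are pairwise at distance at least $3$, which gives square-stability through Theorem \ref{th5}, and then to invoke Proposition \ref{prop11} to pin down $\Omega(G^{2})$ exactly. Write the perfect matching as $M=\{a_{1}b_{1},\dots,a_{k}b_{k}\}$, where each $a_{i}b_{i}$ is pendant and $b_{i}$ denotes its pendant endpoint, so $N_{G}(b_{i})=\{a_{i}\}$. Since $M$ is a matching the $2k$ vertices $a_{1},b_{1},\dots,a_{k},b_{k}$ are pairwise distinct, hence $\left\vert V(G)\right\vert=2k$ and $\mu(G)=k$. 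Because $G$ is connected we may assume $\left\vert V(G)\right\vert\ge 4$ (we set aside the degenerate case $G\cong K_{2}$); then no $a_{i}$ is pendant, since otherwise $\{a_{i},b_{i}\}$ would be a component of $G$, so the pendant vertices of $G$ are exactly $b_{1},\dots,b_{k}$, i.e., $S_{0}=\{b_{1},\dots,b_{k}\}$. First I would note that $S_{0}$ is stable, since distinct $b_{i},b_{j}$ are non-adjacent (each being adjacent only to its own support), that $\left\vert S_{0}\right\vert=k$, and that any stable set of $G$ contains at most one endpoint of each of the $k$ edges of $M$, so $\alpha(G)\le k$; hence $S_{0}\in\Omega(G)$ and $\alpha(G)=k$.

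Next I would verify that $dist_{G}(b_{i},b_{j})\ge 3$ for all $i\ne j$: there is no edge $b_{i}b_{j}$, and a common neighbor of $b_{i}$ and $b_{j}$ would have to equal both $a_{i}$ and $a_{j}$, which is impossible. Thus $S_{0}$ is a maximum stable set of $G$ with pairwise vertex-distances at least $3$, so by the implication \emph{(vi)}$\Rightarrow$\emph{(ii)} of Theorem \ref{th5} the graph $G$ is square-stable. In particular $\alpha(G^{2})=\alpha(G)=k=\left\vert S_{0}\right\vert$, and $S_{0}$ is stable in $G^{2}$ as well (its vertices are pairwise non-adjacent there, by the distance bound), so $S_{0}\in\Omega(G^{2})$.

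For the sharper statement $\Omega(G^{2})=\{S_{0}\}$ the key observation is that no support vertex $a_{i}$ is simplicial in $G$: indeed $b_{i}\in N_{G}(a_{i})$, and if $a_{i}$ has a neighbor $c\ne b_{i}$ then $c$ is not adjacent to $b_{i}$ (whose only neighbor is $a_{i}$), so $N_{G}(a_{i})$ is not a clique; the remaining possibility $N_{G}(a_{i})=\{b_{i}\}$ is precisely the excluded $K_{2}$-component. Since $G$ is square-stable, Proposition \ref{prop11}\emph{(i)} then gives that each $a_{i}$ lies in no maximum stable set of $G^{2}$. Consequently every $S\in\Omega(G^{2})$ is disjoint from $\{a_{1},\dots,a_{k}\}$, so $S\subseteq V(G)\setminus\{a_{1},\dots,a_{k}\}=S_{0}$, and as $\left\vert S\right\vert=k=\left\vert S_{0}\right\vert$ this forces $S=S_{0}$.

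I expect the only mildly delicate parts to be the bookkeeping that $S_{0}$ is exactly the set of pendant vertices and is a maximum stable set of $G$ (both consequences of the perfect-matching hypothesis), and the correct matching-up of the two auxiliary results — Theorem \ref{th5}\emph{(vi)} for square-stability and Proposition \ref{prop11}\emph{(i)} for the uniqueness of the maximum stable set of $G^{2}$; once these are in hand the reasoning is essentially immediate, so no genuine obstacle is anticipated.
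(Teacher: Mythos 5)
Your proof is correct and follows essentially the same route as the paper's: exhibit $S_{0}$ as a maximum stable set of $G$ via the matching bound, observe its vertices are pairwise at distance at least $3$ to get square-stability from Theorem \ref{th5}\emph{(vi)}, and then use Proposition \ref{prop11} to pin down $\Omega(G^{2})$. The only (cosmetic) difference is in the last step: you apply Proposition \ref{prop11}\emph{(i)} to the non-simplicial support vertices to exclude them from every $S\in\Omega(G^{2})$, whereas the paper applies Proposition \ref{prop11}\emph{(ii)} to the pendant vertices (each being the unique simplicial vertex of its simplex) to force all of $S_{0}$ into every such $S$ --- both arguments immediately yield $\Omega(G^{2})=\{S_{0}\}$.
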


\begin{proof}
Clearly, the set $S_{0}$ is stable in $G$, and because
\[
\left\vert V(G)\right\vert /2=\mu(G)=\left\vert S_{0}\right\vert \leq
\alpha(G)\leq\left\vert V(G)\right\vert -\mu(G)=\left\vert V(G)\right\vert
/2,
\]
we infer that $S_{0}\in\Omega(G)$. Since $dist_{G}(a,b)\geq3$ holds for any
$a,b\in S_{0}$, we get that $S_{0}\in\Omega(G^{2})$, i.e., $G$ is
square-stable, by Theorem \ref{th5}\emph{(vi)}.

Since a pendant vertex is the unique simplicial vertex in its simplex,
Proposition \ref{prop11} implies that $S_{0}$ is included in all maximum
stable sets of $G^{2}$. So, we may conclude that $\Omega(G^{2})=\{S_{0}\}$,
because $S_{0}\in\Omega(G^{2})$.
\end{proof}

Notice that there are square-stable graphs with more than one maximum
stable set, and having no perfect matching; e.g., the graph in Figure
\ref{fig111111}.

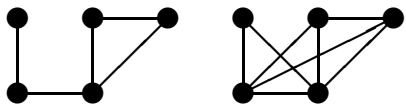
\begin{figure}[h]
\setlength{\unitlength}{1cm}\begin{picture}(5,1.2)\thicklines
\multiput(4,0)(1,0){2}{\circle*{0.29}}
\multiput(4,1)(1,0){3}{\circle*{0.29}}
\put(4,0){\line(1,0){1}}
\put(5,1){\line(1,0){1}}
\put(5,0){\line(0,1){1}}
\put(5,0){\line(1,1){1}}
\put(4,0){\line(0,1){1}}
\multiput(7,0)(1,0){2}{\circle*{0.29}}
\multiput(7,1)(1,0){3}{\circle*{0.29}}
\put(7,0){\line(1,0){1}}
\put(8,1){\line(1,0){1}}
\put(7,0){\line(1,1){1}}
\put(7,0){\line(0,1){1}}
\put(7,1){\line(1,-1){1}}
\put(8,0){\line(0,1){1}}
\put(8,0){\line(1,1){1}}
\put(7,0){\line(2,1){2}}
\end{picture}
\caption{A square-stable graph $G$ and its $G^{2}$. $G$ is not very
well-covered.}
\label{fig111111}
\end{figure}

\begin{theorem}
\label{prop9}For a K\"{o}nig-Egerv\'{a}ry graph $G$ on $n\geq2$ vertices the
following assertions are equivalent:

\emph{(i)} $G$ square-stable;

\emph{(ii)} $G$ has a perfect matching consisting of pendant edges;

\emph{(iii)} $G$ is very well-covered with exactly $\alpha(G)$ pendant vertices.
\end{theorem}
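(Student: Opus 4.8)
The plan is to prove the cycle of implications \emph{(ii)} $\Rightarrow$ \emph{(i)} $\Rightarrow$ \emph{(iii)} $\Rightarrow$ \emph{(ii)}. The first link is immediate: Lemma \ref{prop6} already asserts that a graph with a perfect matching consisting of pendant edges is square-stable, so nothing further is needed there.

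For \emph{(i)} $\Rightarrow$ \emph{(iii)} I would first turn square-stability into global structure and a perfect matching. Since $G$ is square-stable, Theorem \ref{th5}\emph{(v)} gives that $G$ is well-covered; as $G$ is a connected K\"{o}nig-Egerv\'{a}ry graph on $n\geq2$ vertices (hence without isolated vertices), Proposition \ref{prop8}(1) makes $G$ very well-covered, so $n=2\alpha(G)$, and the K\"{o}nig-Egerv\'{a}ry identity forces $\mu(G)=n-\alpha(G)=\alpha(G)=n/2$; in particular $G$ has a perfect matching $M$. Next I would invoke Theorem \ref{th5}\emph{(vi)} to fix $S\in\Omega(G)$ with $dist_{G}(a,b)\geq3$ for all distinct $a,b\in S$. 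Since $S$ is stable and $|S|=\alpha(G)=n/2=|V(G)\setminus S|$, the matching $M$ sends every vertex of $S$ outside $S$, and $v\mapsto M(v)$ is in fact a bijection $S\to V(G)\setminus S$. The crucial claim is that each $v\in S$ is pendant with unique neighbour $M(v)$: were there $x\in N_{G}(v)$ with $x\neq M(v)$, then $x\notin S$, so $x=M(u)$ for a unique $u\in S$ with $u\neq v$; but then $x$ is adjacent to both $u$ and $v$, whence $dist_{G}(u,v)\leq2$, contradicting the choice of $S$. Thus $S$ is a set of $\alpha(G)$ pendant vertices and $M=\{vM(v):v\in S\}$ is a perfect matching of pendant edges; moreover (for $n\geq3$) no other vertex is pendant, since a pendant $w=M(v)$ would give $N_{G}(w)=\{v\}$ and $N_{G}(v)=\{w\}$, making $\{v,w\}$ a connected component and forcing $G=K_{2}$. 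Hence $G$ is very well-covered with exactly $\alpha(G)$ pendant vertices.

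For \emph{(iii)} $\Rightarrow$ \emph{(ii)} I would run the same matching argument with the distinguished stable set being the set $P$ of pendant vertices. Very well-coveredness together with Proposition \ref{prop8}(1) shows $G$ is K\"{o}nig-Egerv\'{a}ry with $n=2\alpha(G)$, hence $\mu(G)=n/2$ and $G$ has a perfect matching $M$. The set $P$ is stable (two adjacent pendant vertices form a $K_{2}$ component, and $G=K_{2}$ is impossible here because then $|P|=2\neq1=\alpha(G)$), and by hypothesis $|P|=\alpha(G)$, so $P\in\Omega(G)$ and $v\mapsto M(v)$ is a bijection $P\to V(G)\setminus P$. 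For $p\in P$ the unique neighbour of $p$ must be its partner $M(p)$, so $M$ consists of pendant edges, which is \emph{(ii)}.

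The main obstacle, common to both nontrivial implications, is precisely the step converting ``a maximum stable set of size $n/2$, a perfect matching, and a local condition (pairwise distance $\geq3$, respectively pendantness)'' into ``the perfect matching is the set of pendant edges''. The pivot is the cardinality bookkeeping $|S|=n/2=|V(G)\setminus S|$, which forces the perfect matching to restrict to a bijection between the stable set and its complement; once that holds, any further neighbour of a vertex of the stable set is excluded either by producing a path of length $2$ inside the set or directly by pendantness. I would also note the degenerate graph $K_{2}$: it satisfies \emph{(i)} and \emph{(ii)} but, having two pendant vertices while $\alpha(K_{2})=1$, fails \emph{(iii)} as literally stated; so the case $n=2$ ($G=K_{2}$) should be set aside and the equivalence read for connected graphs on $n\geq3$ vertices.
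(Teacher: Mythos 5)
Your proof is correct and follows essentially the same route as the paper: a cycle of implications resting on Lemma \ref{prop6}, Theorem \ref{th5}\emph{(v)}--\emph{(vi)}, Proposition \ref{prop8}, and the same distance/counting argument showing that a maximum stable set with pairwise distances $\geq 3$ forces the perfect matching to consist of pendant edges. Your closing remark about $K_{2}$ --- which satisfies \emph{(i)} and \emph{(ii)} yet has two pendant vertices while $\alpha(K_{2})=1$, so fails \emph{(iii)} as literally stated --- is a genuine edge case that the paper's proof of \emph{(ii)} $\Rightarrow$ \emph{(iii)} silently glosses over.
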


\begin{proof}
\emph{(i)} $\Rightarrow$ \emph{(ii)} By Theorem \ref{th5}\emph{(v)}, $G$ is
well-covered, and according to Proposition \ref{prop8}\emph{(2)}, it is also
very well-covered. Hence, we obtain that $\alpha(G)=\mu(G)=n/2$, and $G$ has a
perfect matching $M$. Let
\[
S_{0}=\{a_{i}:1\leq i\leq\alpha(G)\}\in\Omega(G^{2})
\]
and $b_{i}\in V(G)-S_{0}$ be such that $a_{i}b_{i}\in M$ for $1\leq i\leq\alpha(G)$. The
inequality $dist_{G}(v,w)\geq3$ holds for any $v,w\in S_{0}$, because
otherwise $vw\in E(G^{2})$, and this contradicts the stability of $S_{0}$ in
$G^{2}$.

If $b_{j}\in N_{G}(a_{i})$ for some $i\neq j$, it follows that
$dist_{G}(a_{i},a_{j})=2$, in contradiction with $dist_{G}(a_{i},a_{j})\geq3$.
Therefore, $N_{G}(a_{i})=\{b_{i}\},1\leq i\leq\alpha(G)$, i.e., $M$ consists of only pendant edges.

\emph{(ii)} $\Rightarrow$ \emph{(iii)} By Lemma \ref{prop6}, it follows that
$G$ is square-stable. According to Theorem \ref{th5}\emph{(v)}, $G$ is
well-covered, and Proposition \ref{prop8}\emph{(2)} finally assures that $G$
is very well-covered.

\emph{(iii)} $\Rightarrow$ \emph{(i)} Since $G$ is very well-covered with
exactly $\alpha(G)$ pendant vertices, we infer that
\[
S_{0}=\{v:v \ is\ a\ pendant\ vertex\ in\ G\}\in\Omega(G)
\]
and also that the matching $M=\{vw:vw\in
E(G),v\in S_{0}\}$ is perfect and consists of only pendant edges. According to
Lemma \ref{prop6}, it follows that $G$ is square-stable.
\end{proof}

Let us remark that:

\begin{itemize}
\item well-covered K\"{o}nig-Egerv\'{a}ry graphs may not be
square-stable, e.g., $C_{4}$;

\item a K\"{o}nig-Egerv\'{a}ry graph with a unique perfect matching is not
always square-stable, e.g., $P_{6}$ (by the way, it is also a tree) and
$K_{3}+e$;

\item a non-K\"{o}nig-Egerv\'{a}ry graph with a unique perfect matching $M$
may be square-stable, even if $M$ does not consist of only pendant edges (for
instance, see the graph in Figure \ref{fig12222}).
\end{itemize}

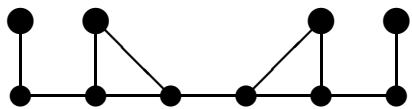
\begin{figure}[h]
\setlength{\unitlength}{1.0cm} \begin{picture}(5,1)\thicklines
\multiput(4,0)(1,0){6}{\circle*{0.29}}
\multiput(4,1)(1,0){2}{\circle*{0.35}}
\multiput(8,1)(1,0){2}{\circle*{0.35}}
\put(4,0){\line(1,0){5}}
\put(5,1){\line(1,-1){1}}
\put(7,0){\line(1,1){1}}
\multiput(4,0)(1,0){2}{\line(0,1){1}}
\multiput(8,0)(1,0){2}{\line(0,1){1}}
\end{picture}
\caption{$G$ is square-stable and has a unique perfect matching containing not
only pendant edges.}
\label{fig12222}
\end{figure}

Theorem \ref{prop9} is true for bipartite graphs as well, since any bipartite
graph is also a K\"{o}nig-Egerv\'{a}ry graph. For trees, Theorem \ref{prop9}
leads to an extension of the characterization that Ravindra gave to
well-covered trees in \cite{rav1}.

\begin{corollary}
\label{cor1}If $T$ is a tree of order $n\geq2$, then the following statements
are equivalent:

\emph{(i)} $T$ is well-covered;

\emph{(ii)} $T$ is very well-covered;

\emph{(iii)} $T$ has a perfect matching consisting of pendant edges;

\emph{(iv)} $T$ is square-stable.
\end{corollary}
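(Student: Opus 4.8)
The plan is to derive the corollary entirely from results already in hand, using the single structural remark that a tree $T$ is connected and bipartite, hence a connected K\"{o}nig-Egerv\'{a}ry graph, and that the hypothesis $n\geq2$ guarantees $T$ has no isolated vertices; thus both Proposition \ref{prop8} and Theorem \ref{prop9} apply to $T$.

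First I would settle \emph{(i)} $\iff$ \emph{(ii)}: by Proposition \ref{prop8}\emph{(2)} a connected K\"{o}nig-Egerv\'{a}ry graph is well-covered if and only if it is very well-covered, and $T$ is of this kind. (This is exactly where $n\geq2$ is used: $K_{1}$ is well-covered but not very well-covered.) Next, \emph{(iii)} $\iff$ \emph{(iv)} is immediate from the equivalence \emph{(i)} $\iff$ \emph{(ii)} of Theorem \ref{prop9} applied to the K\"{o}nig-Egerv\'{a}ry graph $T$; alternatively, \emph{(iii)} $\Rightarrow$ \emph{(iv)} is Lemma \ref{prop6} and \emph{(iv)} $\Rightarrow$ \emph{(iii)} is Theorem \ref{prop9}. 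I would also record the easy step \emph{(iv)} $\Rightarrow$ \emph{(i)}: a square-stable graph is well-covered by Theorem \ref{th5}\emph{(v)}. At this stage the four statements fall into two internally equivalent pairs, \emph{(i)}--\emph{(ii)} and \emph{(iii)}--\emph{(iv)}, with an implication running from the second pair into the first.

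What is still missing is a single genuine implication, say \emph{(ii)} $\Rightarrow$ \emph{(iii)} (equivalently \emph{(i)} $\Rightarrow$ \emph{(iii)}): a well-covered tree has a perfect matching consisting of pendant edges. This is precisely Ravindra's characterization of well-covered trees \cite{rav1}, which the present corollary is meant to extend, so I would simply invoke it; transitivity of \emph{(i)} $\iff$ \emph{(ii)}, \emph{(i)} $\iff$ \emph{(iii)}, and \emph{(iii)} $\iff$ \emph{(iv)} then closes the circle and yields the equivalence of all four assertions.

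The main obstacle — the only point requiring real work — is whether to lean on Ravindra's theorem or to supply a self-contained proof of that last implication. For a self-contained argument I would instead show directly that a very well-covered tree has exactly $\alpha(T)$ pendant vertices and then apply part \emph{(iii)} of Theorem \ref{prop9}, proceeding by induction on $n$: delete a leaf chosen at the end of a longest path together with its neighbor (that neighbor then has degree two, so what remains is again a very well-covered tree, via Proposition \ref{prop8}\emph{(3)}), and use the inductive hypothesis. The delicate part is the bookkeeping in the inductive step — one must verify that the pendant edges of the smaller tree stay pendant in $T$, which reduces to ruling out that the perfect matching contains an edge both of whose endpoints have degree at least two; showing that such an edge is incompatible with $T$ being well-covered is essentially a miniature of Ravindra's original argument, and I expect it to be the only nontrivial ingredient.
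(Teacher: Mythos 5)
Your proposal is correct and matches what the paper intends: the corollary is presented as an extension of Ravindra's characterization of well-covered trees, so the implication from well-covered to the pendant-edge perfect matching is taken from \cite{rav1}, and the remaining equivalences follow from Proposition \ref{prop8}\emph{(2)}, Theorem \ref{th5}\emph{(v)}, and Theorem \ref{prop9} exactly as you assemble them. Your closing cycle \emph{(i)} $\Rightarrow$ \emph{(iii)} $\Rightarrow$ \emph{(iv)} $\Rightarrow$ \emph{(i)} together with \emph{(i)} $\iff$ \emph{(ii)} is complete, and the sketched self-contained induction is a reasonable (if unnecessary) substitute for citing Ravindra.
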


Let us notice that the equivalences appearing in Corollary \ref{cor1} fail for
bipartite graphs. For instance, the graph in Figure \ref{fig4}\emph{\ }is very
well-covered, but is not square-stable.

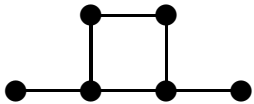
\begin{figure}[h]
\setlength{\unitlength}{1cm}\begin{picture}(5,1)\thicklines
\multiput(5,0)(1,0){4}{\circle*{0.29}}
\multiput(6,1)(1,0){2}{\circle*{0.29}}
\put(5,0){\line(1,0){3}}
\put(6,0){\line(0,1){1}}
\put(7,0){\line(0,1){1}}
\put(6,1){\line(1,0){1}}
\end{picture}
\caption{A very well-covered bipartite graph, which is not square-stable.}
\label{fig4}
\end{figure}

\begin{lemma}
\label{Lemma1} If $G$ is square-stable with $2$ vertices at least,
then $\alpha(G)\leq\mu(G)$.
\end{lemma}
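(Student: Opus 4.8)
The plan is to exploit Theorem \ref{th5}\emph{(vi)}, which gives us a maximum stable set $S$ of $G$ with the property that $dist_G(a,b)\geq 3$ for all distinct $a,b\in S$. The goal is to build a matching of size $|S|=\alpha(G)$, which immediately yields $\mu(G)\geq\alpha(G)$, hence the claimed inequality $\alpha(G)\leq\mu(G)$.

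First I would observe that $S$ cannot be all of $V(G)$ when $G$ is connected with at least $2$ vertices, so every vertex of $S$ has a neighbor in $V(G)-S$. For each $a\in S$, pick some neighbor $f(a)\in N_G(a)\setminus S$. The key step is to argue that the map $f$ is injective: if $f(a)=f(b)=w$ for distinct $a,b\in S$, then $a$ and $b$ are both adjacent to $w$, so $dist_G(a,b)\leq 2$, contradicting $dist_G(a,b)\geq 3$. Hence $f$ is injective, and the edges $\{a\,f(a):a\in S\}$ are $|S|$ distinct edges. I still need these edges to be pairwise non-incident: they cannot share an endpoint in $S$ since the $a$'s are distinct, and they cannot share an endpoint in $V(G)-S$ by injectivity of $f$. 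Therefore $M=\{a\,f(a):a\in S\}$ is a matching of cardinality $\alpha(G)$, which forces $\mu(G)\geq\alpha(G)$.

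I expect the only mild subtlety to be the case analysis for disconnected graphs, but the paper has already reduced to the connected case, and in any event the distance-$3$ condition alone (which does not require connectedness) guarantees each $a\in S$ has a private neighbor outside $S$ as long as $a$ is not isolated; an isolated vertex would force $G$ to be a single vertex, excluded by the hypothesis $|V(G)|\geq 2$ once connectedness is assumed — and for the disconnected reduction each component has at least one edge or is handled componentwise. So the genuinely load-bearing step is simply the injectivity of the private-neighbor assignment, which is immediate from the distance hypothesis; there is no real obstacle here beyond writing the matching down carefully.

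Alternatively, and perhaps more cleanly, one can phrase the whole argument through the fact that an independent set with pairwise distances $\geq 3$ has disjoint closed neighborhoods, so choosing one edge at each vertex of $S$ into its (necessarily nonempty, for $n\geq 2$ connected) neighborhood automatically produces a matching; this is the same proof repackaged. I would present the $f$-injectivity version since it makes the contradiction with $dist_G(a,b)\geq 3$ most transparent.
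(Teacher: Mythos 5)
Your proof is correct and follows essentially the same route as the paper: both invoke Theorem \ref{th5}\emph{(vi)} to obtain a maximum stable set $S$ with pairwise distances at least $3$ and then build a matching of size $\left\vert S\right\vert$ by assigning each vertex of $S$ a private neighbor, with distinctness forced by the distance condition. The paper picks these neighbors as the first internal vertices of shortest paths from each $v_{i}$ to a fixed $v_{\alpha(G)}$ (plus one extra vertex for $v_{\alpha(G)}$ itself), whereas your arbitrary choice of a neighbor, justified by connectedness and $n\geq2$, is a simpler and equally valid implementation of the same idea.
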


\begin{proof}
By Theorem \ref{th5}\emph{(vi)} there exists a maximum stable set
\[
S=\left\{  v_{i}:1\leq i\leq\alpha(G)\right\}
\]
in $G$ such that $dist_{G}\left(  a,b\right)  \geq3$ for all $a,b\in S$. It
means that for every $i\in\left\{  1,2,...,\alpha\left(  G\right)  -1\right\}
$ there is a shortest path of length $3$ at least from $v_{i}$ to
$v_{\alpha\left(  G\right)  }$: $v_{i},w_{i},...w^{i},$ $v_{\alpha\left(
G\right)  }$. All the vertices $w_{i},1\leq i\leq\alpha\left(  G\right)  -1$
and $w^{1}$ are pairwise distinct, otherwise there will be a pair of vertices
in $S$ at distance $2$. Hence
\[
M=\left\{  v_{i}w_{i}:1\leq i\leq\alpha(G)-1\right\}  \cup\left\{
v_{\alpha(G)}w^{1}\right\}
\]
is a matching in $G$. Thus $\alpha(G)=\left\vert S\right\vert \leq\left\vert
M\right\vert \leq\mu(G)$.
\end{proof}

\begin{proposition}
\label{Cor10} Let $G^{2}$ be  a K\"{o}nig-Egerv\'{a}ry graph with $2$ vertices
at least. Then $G$ is square-stable if and only if $G$ is a
K\"{o}nig-Egerv\'{a}ry graph with a perfect matching.
\end{proposition}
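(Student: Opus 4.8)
I would prove the two implications separately, using only Lemma~\ref{Lemma1}, the elementary bound $\mu(H)\le |V(H)|/2$ valid for every graph $H$, the containment $E(G)\subseteq E(G^{2})$ (so every matching of $G$ is a matching of $G^{2}$, and a perfect matching of $G$ is a perfect matching of $G^{2}$), and the remark recorded in the Introduction that a graph possessing a perfect matching is a König-Egerváry graph if and only if $\alpha=\mu$. Throughout I write $n=|V(G)|=|V(G^{2})|\ge 2$.

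For the forward implication, I would assume $G$ is square-stable, so $\alpha(G)=\alpha(G^{2})$. Since $G^{2}$ is a König-Egerváry graph, $\alpha(G^{2})+\mu(G^{2})=n$, and because $\mu(G^{2})\le n/2$ this gives $\alpha(G)=\alpha(G^{2})=n-\mu(G^{2})\ge n/2$. On the other hand, Lemma~\ref{Lemma1} yields $\alpha(G)\le\mu(G)$, while trivially $\mu(G)\le n/2$. Chaining these, $n/2\le\alpha(G)\le\mu(G)\le n/2$, hence $\alpha(G)=\mu(G)=n/2$. Thus $G$ has a perfect matching, and since $\alpha(G)=\mu(G)$, the quoted remark shows $G$ is a König-Egerváry graph.

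For the converse, I would assume $G$ is a König-Egerváry graph with a perfect matching $M$. By the quoted remark, $\alpha(G)=\mu(G)=n/2$. As $M\subseteq E(G)\subseteq E(G^{2})$, the set $M$ is a perfect matching of $G^{2}$ as well, so $\mu(G^{2})=n/2$. Using that $G^{2}$ is a König-Egerváry graph, $\alpha(G^{2})=n-\mu(G^{2})=n/2=\alpha(G)$, so $G$ is square-stable.

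I do not anticipate a serious obstacle: the whole argument is a short sandwich of inequalities. The one point deserving a moment's care is the lower bound $\alpha(G)\ge n/2$ in the forward direction — it is precisely here that the hypothesis ``$G^{2}$ is a König-Egerváry graph'' enters, combined with square-stability to transfer the identity $\alpha(G^{2})+\mu(G^{2})=n$ back to $G$; Lemma~\ref{Lemma1} then supplies the matching upper bound that pins everything to $n/2$.
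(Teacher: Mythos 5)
Your proof is correct and follows essentially the same route as the paper: both directions rest on Lemma~\ref{Lemma1} together with the König--Egerváry identity for $G^{2}$ and the monotonicity $\mu(G)\leq\mu(G^{2})$, $\alpha(G^{2})\leq\alpha(G)$, the only cosmetic difference being that you pin the sandwich of inequalities to the explicit value $n/2$ while the paper chains $\mu(G)\leq\mu(G^{2})\leq\alpha(G^{2})\leq\alpha(G)$ directly.
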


\begin{proof}
The following inequalities are true for every graph $G$:
\[
\mu(G)\leq\mu(G^{2})\ and\ \alpha(G^{2})\leq\alpha(G).
\]

Since $G^{2}$ is a K\"{o}nig-Egerv\'{a}ry graph, $\mu(G^{2})\leq\alpha\left(
G^{2}\right)  $. Consequently,
\[
\mu(G)\leq\mu(G^{2})\leq\alpha(G^{2})\leq\alpha(G).
\]

If $G$ is square-stable, then these inequalities together with Lemma
\ref{Lemma1}\ give
\[
\mu(G)=\mu(G^{2})=\alpha(G^{2})=\alpha(G).
\]
Moreover,
\[
\left\vert V(G)\right\vert =\mu(G^{2})+\alpha(G^{2})=\mu(G)+\alpha(G),
\]
which means that $G$ is a K\"{o}nig-Egerv\'{a}ry graph. In addition, $G$\ has
a perfect matching, because $\mu(G)=\alpha\left(  G\right)  $.

Conversely, if $G$ is a K\"{o}nig-Egerv\'{a}ry graph with a perfect matching,
then
\[
\mu(G)+\alpha(G)=\left\vert V(G)\right\vert =\mu(G^{2})+\alpha(G^{2})\
and\ \mu(G)=\mu(G^{2}).
\]
Thus $\alpha(G)=\alpha(G^{2})$, i.e., $G$ is a square-stable graph.
\end{proof}

It is worth noticing that if $G$ is square-stable, then it is not enough to know that
$\mu(G)=\alpha(G)$ in order to be sure that $G$ is a
K\"{o}nig-Egerv\'{a}ry graph. For instance, see Figure \ref{fig12}.

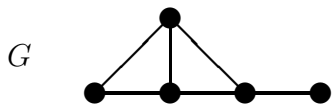
\begin{figure}[h]
\setlength{\unitlength}{1cm}\begin{picture}(5,1)\thicklines
\multiput(6,0)(1,0){4}{\circle*{0.29}}
\put(7,1){\circle*{0.29}}
\put(6,0){\line(1,0){3}}
\put(7,0){\line(0,1){1}}
\put(8,0){\line(-1,1){1}}
\put(6,0){\line(1,1){1}}
\put(5,0.5){\makebox(0,0){$G$}}
\end{picture}\caption{$G$ is a square-stable non-K\"{o}nig-Egerv\'{a}ry graph: $\mu(G)=\alpha(G)=2$.}
\label{fig12}
\end{figure}

Combining Propositions \ref{prop8}, \ref{Cor10} and Theorem \ref{prop9}, we
obtain the following.

\begin{corollary}
\emph{(i)} $G$ is square-stable and very well-covered if and only if $G$ is a
K\"{o}nig-Egerv\'{a}ry graph with exactly $\alpha(G)$ pendant vertices.

\emph{(ii)} If $G$ is square-stable, then either $G$\ and $G^{2}$ are
K\"{o}nig-Egerv\'{a}ry graphs or both of them are not.
\end{corollary}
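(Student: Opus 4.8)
The plan is to obtain part \emph{(i)} by simply chaining together the three results named just before the statement. First I would invoke Theorem~\ref{prop9}: for a K\"{o}nig-Egerv\'{a}ry graph, being square-stable is equivalent to being very well-covered with exactly $\alpha(G)$ pendant vertices. So if $G$ is a K\"{o}nig-Egerv\'{a}ry graph with exactly $\alpha(G)$ pendant vertices, then by Theorem~\ref{prop9} (the direction \emph{(iii)} $\Rightarrow$ \emph{(i)}, together with \emph{(ii)} $\Rightarrow$ \emph{(iii)}) it is square-stable and very well-covered, giving one implication. For the converse, suppose $G$ is square-stable and very well-covered. By Proposition~\ref{prop8}\emph{(1)}, a very well-covered graph is in particular a K\"{o}nig-Egerv\'{a}ry graph; then Theorem~\ref{prop9} applied to this K\"{o}nig-Egerv\'{a}ry graph, combined with square-stability, forces it to have exactly $\alpha(G)$ pendant vertices. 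Thus \emph{(i)} follows.

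For part \emph{(ii)}, the plan is to use Proposition~\ref{Cor10} in both directions. Assume $G$ is square-stable. I would argue dichotomously according to whether $G^{2}$ is a K\"{o}nig-Egerv\'{a}ry graph. If $G^{2}$ is a K\"{o}nig-Egerv\'{a}ry graph, then Proposition~\ref{Cor10} (applied with the roles as stated there) tells us that $G$ is a K\"{o}nig-Egerv\'{a}ry graph as well, so both $G$ and $G^{2}$ are K\"{o}nig-Egerv\'{a}ry. Conversely, if $G$ is a K\"{o}nig-Egerv\'{a}ry graph, then $\alpha(G)=\mu(G)=|V(G)|/2$ forces a perfect matching, so $G$ is a K\"{o}nig-Egerv\'{a}ry graph with a perfect matching; by the ``only if'' content of Proposition~\ref{Cor10} read contrapositively, or more directly by noting $\mu(G)=\mu(G^{2})$ and $\alpha(G)=\alpha(G^{2})$ give $|V(G)|=\mu(G^{2})+\alpha(G^{2})$, we conclude $G^{2}$ is a K\"{o}nig-Egerv\'{a}ry graph too. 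Hence the two classes — ``$G$ is K\"{o}nig-Egerv\'{a}ry'' and ``$G^{2}$ is K\"{o}nig-Egerv\'{a}ry'' — coincide under the square-stability hypothesis, which is exactly the stated ``either both or neither''.

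The only delicate point, and the one I would check most carefully, is the direction of the biconditional in Proposition~\ref{Cor10}: it is phrased under the hypothesis that $G^{2}$ \emph{is} a K\"{o}nig-Egerv\'{a}ry graph, so to get the reverse implication in part \emph{(ii)} (from ``$G$ is K\"{o}nig-Egerv\'{a}ry'' to ``$G^{2}$ is K\"{o}nig-Egerv\'{a}ry'') I should not quote Proposition~\ref{Cor10} verbatim but instead re-run its short inequality argument: $\mu(G)\le\mu(G^{2})\le\alpha(G^{2})$ may fail without the hypothesis, so I would instead use that a square-stable $G$ with a perfect matching satisfies $\mu(G)=\alpha(G)$ (perfect matching) and $\alpha(G)=\alpha(G^{2})$ (square-stable), together with $\mu(G)=\mu(G^{2})$ for any graph with a perfect matching, to get $|V(G)|=\mu(G)+\alpha(G)=\mu(G^{2})+\alpha(G^{2})$. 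That closes the loop. Everything else is a direct citation, so I expect no real obstacle beyond being scrupulous about which hypotheses each quoted result requires.
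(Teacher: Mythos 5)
Your treatment of part \emph{(ii)} is correct and, in fact, more careful than the paper, which offers no argument beyond the phrase ``combining Propositions \ref{prop8}, \ref{Cor10} and Theorem \ref{prop9}''. You rightly observe that Proposition \ref{Cor10} only yields the implication ``$G^{2}$ K\"{o}nig-Egerv\'{a}ry $\Rightarrow$ $G$ K\"{o}nig-Egerv\'{a}ry'' for square-stable $G$, and that the converse implication needs the hypothesis-free computation $\left\vert V(G)\right\vert =\mu(G)+\alpha(G)=\mu(G^{2})+\alpha(G^{2})$, using $\mu(G)=\mu(G^{2})$ (valid because $G$ has a perfect matching) and $\alpha(G)=\alpha(G^{2})$ (square-stability). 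The one citation you leave implicit is that the perfect matching itself comes from Lemma \ref{Lemma1}: square-stability gives $\alpha(G)\leq\mu(G)$, the K\"{o}nig-Egerv\'{a}ry property gives $\alpha(G)\geq\mu(G)$, and then $\alpha(G)+\mu(G)=\left\vert V(G)\right\vert $ forces $\mu(G)=\left\vert V(G)\right\vert /2$. Make that explicit.

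The genuine gap is in the reverse direction of part \emph{(i)}. Theorem \ref{prop9}\emph{(iii)} reads ``$G$ is \emph{very well-covered} with exactly $\alpha(G)$ pendant vertices'', whereas the right-hand side of the corollary hands you only ``K\"{o}nig-Egerv\'{a}ry with exactly $\alpha(G)$ pendant vertices''. Your parenthetical ``(the direction \emph{(iii)} $\Rightarrow$ \emph{(i)}, together with \emph{(ii)} $\Rightarrow$ \emph{(iii)})'' quietly supplies the missing adjective: you invoke an implication whose hypothesis you do not have. This is not a cosmetic slip, because the implication as literally stated is false: the star $K_{1,n}$ with $n\geq2$ (already $P_{3}$ will do) is bipartite, hence K\"{o}nig-Egerv\'{a}ry, and has exactly $\alpha(K_{1,n})=n$ pendant vertices, yet $\alpha(K_{1,n}^{2})=\alpha(K_{n+1})=1<n$, so it is neither square-stable nor very well-covered. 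The right-hand side of \emph{(i)} needs an extra hypothesis --- ``very well-covered'', or ``has a perfect matching'', or a condition such as $core(G)=\emptyset$ of the kind the paper does include in the analogous girth-$\geq6$ theorem --- before the equivalence can hold. You should have flagged the mismatch between the corollary's wording and Theorem \ref{prop9}\emph{(iii)} rather than bridging it with a citation whose hypotheses are not met.
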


Another consequence of Theorem \ref{prop9} is the following extension of the
characterization that Finbow \textit{et al.} gave in \cite{finhart1} for
well-covered graphs having the girth $\geq6$.

\begin{theorem}
Let $G$ be a graph of girth $\geq6$, which is isomorphic to neither $C_{7}$
nor $K_{1}$. Then the following assertions are equivalent:

\emph{(i)} $G$ is well-covered;

\emph{(ii)} $G$ has a perfect matching consisting of pendant edges;

\emph{(iii)} $G$ is very well-covered;

\emph{(iv)} $G$ is a K\"{o}nig-Egerv\'{a}ry graph with exactly $\alpha(G)$ pendant vertices, and $core(G)=\emptyset$;

\emph{(v)} $G$ is a K\"{o}nig-Egerv\'{a}ry square-stable graph.
\end{theorem}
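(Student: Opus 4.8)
The plan is to establish the cycle of implications (i) $\Rightarrow$ (iii) $\Rightarrow$ (ii) $\Rightarrow$ (iv) $\Rightarrow$ (v) $\Rightarrow$ (i), leaning heavily on the earlier results and on the Finbow--Hartnell--Nowakowski theorem that a connected well-covered graph of girth $\geq 6$, not isomorphic to $C_{7}$ or $K_{1}$, has a perfect matching consisting of pendant edges. First I would note that a graph of girth $\geq 6$ cannot contain a triangle, so every clique has at most two vertices; consequently, a simplex of $G$ is just a pendant edge (a simplicial vertex with its unique neighbor) or an isolated vertex, and "$G$ simplicial and well-covered" collapses to "$G$ has a perfect matching of pendant edges." This observation does most of the heavy lifting.

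For (i) $\Rightarrow$ (iii): if $G$ is well-covered, invoke the Finbow \emph{et al.} result to get a perfect matching consisting of pendant edges. For (iii) $\Rightarrow$ (ii): Proposition~\ref{prop8}(1) says a very well-covered graph is a well-covered K\"onig-Egerv\'ary graph; then the girth hypothesis forces the well-covered structure to be a pendant-edge perfect matching as above (or cite the same Finbow result directly). For (ii) $\Rightarrow$ (iv): a perfect matching of pendant edges makes $G$ a K\"onig-Egerv\'ary graph with $\alpha(G)=\mu(G)=n/2$, and the pendant vertices form the unique maximum stable set $S_{0}$ (by Lemma~\ref{prop6}, $\Omega(G^{2})=\{S_{0}\}$ and $G$ is square-stable, hence by Theorem~\ref{prop9} very well-covered with exactly $\alpha(G)$ pendant vertices); since every pendant vertex lies in $S_{0}$ there are exactly $\alpha(G)$ of them, and $core(G)=\cap\Omega(G)$ — but here I must check $|\Omega(G)|>1$ so that the intersection is strictly smaller, i.e.\ that $G$ is not complete; since $n\geq 2$ and $G\neq K_{1}$, and girth $\geq 6$ rules out $K_{2}$... wait, $K_2$ has no cycles so infinite girth, but $K_2$ \emph{is} a single pendant edge and $\Omega(K_2)$ has two elements, so $core(K_2)=\emptyset$, consistent. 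More generally each $b_{i}$ can be swapped for its matched pendant $a_{i}$, so no vertex is in every maximum stable set and $core(G)=\emptyset$. For (iv) $\Rightarrow$ (v): having exactly $\alpha(G)$ pendant vertices in a K\"onig-Egerv\'ary graph, together with $core(G)=\emptyset$, should yield the pendant-edge perfect matching (the pendant vertices are pairwise non-adjacent since girth $\geq 6 > 2$, so they form a stable set of size $\alpha(G)$, i.e.\ a maximum stable set; the $core(G)=\emptyset$ condition plus K\"onig-Egerv\'ary then forces their neighbors to be matched to them), and Lemma~\ref{prop6} gives square-stability; Theorem~\ref{prop9} already packages "K\"onig-Egerv\'ary + square-stable $\Leftrightarrow$ perfect matching of pendant edges." For (v) $\Rightarrow$ (i): a square-stable graph is well-covered by Theorem~\ref{th5}(v), which closes the loop.

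The main obstacle I anticipate is handling the exceptional graphs and degenerate cases cleanly — in particular making sure the Finbow \emph{et al.} characterization is applied only where its hypotheses ($C_{7}$ excluded, $K_{1}$ excluded, connectedness) are met, and verifying that the $core(G)=\emptyset$ clause in (iv) is exactly the right extra condition to rule out any K\"onig-Egerv\'ary graph with $\alpha(G)$ pendant vertices that nonetheless fails to have the pendant-edge perfect matching (the graph of Figure~\ref{fig4} shows $\alpha(G)$ pendant vertices alone is insufficient without controlling the core, or rather that condition (iv) genuinely needs both clauses). Everything else is bookkeeping: translating "girth $\geq 6$" into "no $K_{3}$, simplices are pendant edges," and then citing Theorem~\ref{prop9}, Lemma~\ref{prop6}, Proposition~\ref{prop8}, and Theorem~\ref{th5} in the right order. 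I would write the proof as a short chain, deferring the structural girth observation to a single opening sentence and then stating each implication in one or two lines.
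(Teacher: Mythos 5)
Your overall architecture matches the paper's: the equivalence of (i), (ii), (iii) is delegated to Finbow \emph{et al.} \cite{finhart1}, and the square-stability clause (v) is handled via Lemma \ref{prop6} and Theorem \ref{prop9}, exactly as the paper does. The one place you diverge is condition (iv): the paper simply cites \cite{levm} for (iii) $\Leftrightarrow$ (iv), whereas you attempt a direct argument, and that is where your sketch has a gap to close. In (iv) $\Rightarrow$ (v) you assert that ``$core(G)=\emptyset$ plus K\"onig-Egerv\'ary forces the neighbors to be matched'' to the pendant vertices, but the set $M=\{v\,n(v): v\ \text{pendant}\}$ is a matching only if distinct pendant vertices have distinct neighbors. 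This is precisely what $core(G)=\emptyset$ buys you, and the argument should be made explicit: if two pendant vertices $u,v$ shared a neighbor $w$, then no maximum stable set could contain $w$ (replacing $w$ by $\{u,v\}$ would enlarge it), hence every maximum stable set would contain both $u$ and $v$, putting them in $core(G)$ --- a contradiction. Once that is in place, $M$ is a matching of size $\alpha(G)$, the K\"onig-Egerv\'ary equality $\alpha(G)+\mu(G)=n$ together with $\alpha(G)\geq\mu(G)$ gives $n=2\alpha(G)$, and $M$ is a perfect matching of pendant edges, so Theorem \ref{prop9} applies. Your (ii) $\Rightarrow$ (iv) swap argument for emptiness of the core is correct, and (v) $\Rightarrow$ (i) via Theorem \ref{th5}(v) is fine. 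In short: your route is self-contained where the paper leans on an external reference, at the cost of one structural lemma (distinctness of the pendant vertices' neighbors) that you flagged but did not actually prove; the paper's version is shorter but opaque at that step.
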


\begin{proof}
The equivalences \emph{(i)} $\Leftrightarrow$ \emph{(ii)} $\Leftrightarrow$
\emph{(iii)} are proved in \cite{finhart1}. In \cite{levm} it is shown
that \emph{(iii)} $\Leftrightarrow$ \emph{(iv)}. Finally, \emph{(ii)}
$\Leftrightarrow$ \emph{(v)} is true according to Lemma \ref{prop6} and Theorem
\ref{prop9}.
\end{proof}

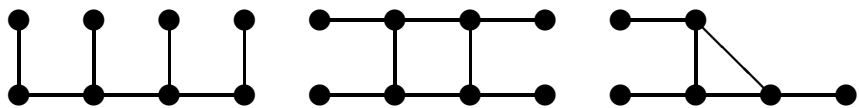
\begin{figure}[h]
\setlength{\unitlength}{1.0cm} \begin{picture}(5,1.2)\thicklines
\multiput(1,0)(1,0){12}{\circle*{0.29}}
\multiput(1,1)(1,0){10}{\circle*{0.29}}
\put(1,0){\line(1,0){3}}
\put(5,0){\line(1,0){3}}
\put(5,1){\line(1,0){3}}
\put(9,0){\line(1,0){3}}
\put(9,1){\line(1,0){1}}
\multiput(1,0)(1,0){4}{\line(0,1){1}}
\multiput(6,0)(1,0){2}{\line(0,1){1}}
\put(10,0){\line(0,1){1}}
\put(10,1){\line(1,-1){1}}
\end{picture}
\caption{Square-stable K\"{o}nig-Egerv\'{a}ry graphs.}
\label{fig1}
\end{figure}

\begin{center}
\textsc{3. Conclusions}
\end{center}

In this paper we concentrated on K\"{o}nig-Egerv\'{a}ry graphs enjoying the
equality $\alpha(G^{2})=\alpha(G)$. It seems to be interesting to study other
families of graphs satisfying the same property.

%\bigskip

%$$
%\textsc{References}
%$$

\bigskip

\noindent
Vadim E. Levit \\
Department of Mathematics and Computer Science\\
Ariel University Center of Samaria\\
Ariel 40700, Israel\\
email:\textit{levitv@ariel.ac.il}

\bigskip

\noindent
Eugen Mandrescu \\
Department of Computer Science\\
Holon Institute of Technology\\
52 Golomb Str., Holon, Israel\\
email:\textit{eugen\_m@hit.ac.il}
\end{document}